\documentclass[11pt]{article}

\usepackage[margin=1in]{geometry}
\usepackage{amsmath,amssymb,amsthm,mathtools}
\usepackage{authblk}
\usepackage{microtype}
\usepackage[hidelinks]{hyperref}

\newtheorem{theorem}{Theorem}[section]
\newtheorem{lemma}[theorem]{Lemma}

\newcommand{\cL}{c_{\mathrm{L}}}
\newcommand{\Lev}{\mathcal{L}}

\title{Improved bounds for the lazy cops and robbers \\
on generalized hypercubes}
\author[]{Anand Babu}
\author[]{Ashwin Jacob}
\author[]{Karunakaran Murali Krishnan}
\author[]{Reshma Roy}
\author[]{Sreekala S}
\affil[]{Department of Computer Science \& Engineering,\\
National Institute of Technology Calicut, Kozhikode, India}
\date{}

\begin{document}
\maketitle

\begin{abstract}
In Lazy Cops and Robbers, at most one cop moves on each cop turn.  We study
the lazy cop number of the generalized hypercube $Q(n,m)$, whose vertex set
is ${\{0,1,\ldots,m\}}^n$.  For each fixed integer $m\geq2$, we prove the
asymptotic upper bound
$$
 \cL(Q(n,m))
 =O\!\left(\frac{{(m+1)}^n}{n^{3/2}}\right).
$$
This result improves the upper bound of Sim, Tan, and Wong by a factor of
$\log n$.  The proof combines a moving dominating-set argument with an explicit
dominating-set construction inside the support classes of each level. 
As a separate domination result, we show that, for fixed integers $m\geq2$
and $d\geq1$, the Hamming graph $K_m^{\square k}$ has a distance-$d$
dominating set of asymptotic size $O(m^k/k^d)$. This order is
optimal up to a constant factor.
\end{abstract}

\section{Introduction}

Let $G$ be a finite connected graph.  In \emph{Lazy Cops and Robbers}, the
cops first choose their initial vertices, after which the robber chooses an
initial vertex.  The cops and the robber then alternate turns, beginning
with the cops.  On a cop turn, either all cops remain stationary or exactly
one cop moves along an edge.  On a robber turn, the robber either remains
stationary or moves along an edge.  Capture occurs when a cop occupies the
robber's vertex.  The \emph{lazy cop number} $\cL(G)$ is the minimum number
of cops that can force capture after finitely many turns.

For each positive integer $n$, let $[n]=\{1,\ldots,n\}$.  For positive
integers $n$ and $m$, the \emph{generalized hypercube} $Q(n,m)$ has vertex
set ${\{0,1,\ldots,m\}}^n$.  Two vertices are adjacent when they differ in
exactly one coordinate.

Bal, Bonato, Kinnersley, and Pra\l at~\cite{BalEtAl} studied Lazy Cops and
Robbers on hypercubes.  Offner and Ojakian~\cite{OffnerOjakian} developed
a strategy that successively guards the levels of a hypercube, and Sim,
Tan, and Wong~\cite{SimTanWong} extended the method to generalized
hypercubes.  For
fixed $m\geq2$, Sim, Tan, and Wong proved the asymptotic upper
bound~\cite[Theorem~2.5]{SimTanWong}
\[
 \cL(Q(n,m))
 =O\!\left(\frac{{(m+1)}^n\log n}{n^{3/2}}\right).
\]
Their logarithmic factor arises from applying a general upper bound for the
domination number in terms of the order and minimum degree to the graphs
induced by the levels; see~\cite[Theorem~1.2.2]{AlonSpencer}.  We replace
that general estimate with an explicit algebraic construction.

Our main result is the following.

\begin{theorem}\label{thm:main}
For each fixed integer $m\geq2$ and for sufficiently large $n$, we have
\[
 \cL(Q(n,m))
 =O\!\left(\frac{{(m+1)}^n}{n^{3/2}}\right).
\]
\end{theorem}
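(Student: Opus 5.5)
We follow the level-by-level strategy of Offner--Ojakian and Sim--Tan--Wong. The only new ingredient is a sharper bound on the number of cops needed to dominate one level. For a vertex $x$ let $|x|=\sum_i x_i$ denote its weight, and let $\Lev_k=\{x:|x|=k\}$ be the $k$-th level. We split each level further into \emph{support classes}. For $S\subseteq[n]$, the class consists of the vertices of $\Lev_k$ whose set of nonzero coordinates is exactly $S$.

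\textbf{Step 1 (reduction).} We reuse the moving dominating-set argument of \cite{SimTanWong} essentially verbatim. The cops sweep the levels in order. A team of cops occupies a set $D_k$ that dominates $\Lev_k$ in the appropriate sense, meaning every vertex of $\Lev_k$ is adjacent to, or equal to, a cop vertex via a move that preserves the invariant. This team shifts to $D_{k+1}$ by moving one cop at a time, and the robber is confined to levels above $k$. The levels near the extremes contain few vertices and can be handled by brute force. The levels within $O(\sqrt n)$ of the middle level $mn/2$ dominate the cost. In \cite{SimTanWong} the total cop count is $\max_k |D_k|$, up to a constant factor and lower-order terms. The $\log n$ came only from bounding $|D_k|$ by $|\Lev_k|(1+\ln\delta)/\delta$ with $\delta=\Theta(n)$. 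We must therefore show
\[
|D_k| = O\!\left(\frac{|\Lev_k|}{n}\right)\qquad\text{for all }k,
\]
and we also use the standard fact that $\max_k|\Lev_k| = \Theta\bigl((m+1)^n/\sqrt n\bigr)$. Together these give the bound of Theorem~\ref{thm:main}.

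\textbf{Step 2 (explicit domination inside support classes).} Fix a support $S$ with $|S|=s$ and a weight $k$. The relevant neighbours of $x$ change the value of a coordinate by moving weight between positions, either inside $S$ or across the boundary of $S$. We look for a code-like set: choose cop vertices $y$ in a suitable neighbouring class, and require that a checksum $\sum_{i} i\,x_i \bmod p$ take a fixed residue, for a prime $p=\Theta(n)$ with $p\ge n$. Each single-coordinate change shifts the checksum by $c\cdot i$ for a nonzero $c\in\{1,\dots,m\}$. For a typical vertex there are $\Theta(n)$ available positions $i$, and these hit every residue. Hence each vertex of the class is adjacent to some vertex of the chosen residue class. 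This is the Hamming-code construction, and it generalizes to the distance-$d$ statement mentioned in the abstract. The pigeonhole principle then gives a residue class of size at most $|\text{class}|/p$. Summing over supports and over the few adjacent levels used by the strategy gives $|D_k|=O(|\Lev_k|/n)$.

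\textbf{Main obstacle.} The hard part is making the checksum argument cover every vertex rather than only typical ones. Vertices with few positions to modify fail the "every residue is reached" argument: for example, vertices with almost all coordinates equal to $0$ or to $m$. Vertices whose coordinate changes must keep the support or stay at adjacent levels face the same problem. We would first bound the number of such atypical vertices in the middle levels; by a Chernoff estimate it is exponentially small relative to $|\Lev_k|/n$. We then add all of them to $D_k$ individually. A second difficulty is checking that the one-cop-per-turn shifting from $D_k$ to $D_{k+1}$ in \cite{SimTanWong} still works with these algebraically defined sets, in particular that a matching between $D_k$ and $D_{k+1}$ at bounded distance exists. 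We expect to handle this by choosing the residues for consecutive levels compatibly, so that a single coordinate increment maps cop positions to cop positions.
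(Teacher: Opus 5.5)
\textbf{The gap is in Step~2.} That step carries the whole content of the theorem, and the checksum construction does not dominate. Because $p\ge n$, the positions $1,\dots,n$ are distinct modulo $p$. So for each step size $c$ there is at most one position $i\in[n]$ whose shift $c\,i$ sends the checksum of $x$ to the residue demanded by the target class. A vertex therefore has at most $2m$ chances to be dominated. For a constant fraction of the vertices of a middle level, all of these chances fail: the required position is not in $[n]$, or it does not admit that step (e.g.\ a decrement at a coordinate equal to $0$). Your claim that $\Theta(n)$ available positions ``hit every residue'' is unjustified. Each residue has at most $2m$ representatives $c\,i$, and all of them are unavailable with probability bounded below by a constant, so a typical vertex misses a constant fraction of $\mathbb Z_p$. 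The exceptional vertices are thus a constant fraction of each level, not an exponentially small one, and adding them individually costs $\Theta(|\Lev_k|)$. Shrinking $p$ does not help either. A random-looking set of available positions reaches the required residue for all but an $O(1/n)$ fraction of vertices only when each residue has $\Omega(\log n)$ representatives, i.e.\ $p=O(n/\log n)$. Cop sets of size $|\Lev_k|/p$ then bring back exactly the $\log n$ factor you want to remove.

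\textbf{The missing idea is to remove availability altogether, and that also requires changing the levels.} Your weight levels fail on two counts. First, an edge of $Q(n,m)$ changes one coordinate to any other value, so the weight can jump by up to $m$ and the robber can step over a guarded level. Bands of $m$ consecutive weight levels would repair the sweep, but not the domination. Second, each weight level is an independent set, so its support classes have no internal edges to exploit. The paper instead uses $\Lev_k=\{x:|\operatorname{supp}(x)|=k\}$; support size changes by at most one along any edge. For $|S|=k$ the class $V_S$ induces $K_m^{\square k}$, so every coordinate of $S$ can always be changed. The construction then runs as follows.
Identify $\{1,\dots,m\}$ with $\mathbb Z_m$, and choose $t$ maximal with $m^t-1\le k$.
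Label $m^t-1$ coordinates of $S$ by the distinct nonzero elements of $\mathbb Z_m^t$ and the rest by $0$, and take the kernel of $x\mapsto\sum_j x_ja_j$.
Every nonzero syndrome equals some label $a_j$, so decrementing coordinate $j$ modulo $m$ lands in the kernel; this keeps the coordinate nonzero, so you stay in $V_S$.
This dominates $V_S$ with $m^{k-t}<m^{k+1}/(k+1)$ vertices, with no exceptions and for every $m\ge2$, not only prime powers. Summing over $S$ gives $|B_k|<\binom nk m^{k+1}/(k+1)$. The identity $\binom nk m^k=(m+1)^n\Pr(X=k)$, with $X$ binomial with parameters $n$ and $m/(m+1)$, then yields $O((m+1)^n/n^{3/2})$. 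For $k\ge \frac{mn}{2(m+1)}$ this uses $\Pr(X=k)=O(1/\sqrt n)$, and for smaller $k$ a Chernoff bound. Finally, your worry about a bounded-distance matching between consecutive dominating sets is unnecessary. With two teams of $M$ cops, one team holds $B_k$ while the other walks along arbitrary paths to $B_{k-1}$, which gives $\cL(Q(n,m))\le 2M$.
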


The algebraic construction admits a natural extension to distance
domination in Hamming graphs.  We state and prove this extension separately
in Section~\ref{sec:distance-domination}.  The case $d=1$ supplies the
dominating sets used to prove Theorem~\ref{thm:main}.

\section{Distance Domination in Hamming Graphs}\label{sec:distance-domination}

The Hamming graph $K_m^{\square k}$ has words of length $k$ over the alphabet $[m]$ as its vertices, and
two vertices are adjacent when they differ in exactly one coordinate. 
%The Hamming graph $K_m^{\square k}$ has vertex set ${[m]}^k$.  Two vertices are adjacent if and only if they differ in exactly one coordinate.  

A set
$C$ is a \emph{distance-$d$ dominating set} if every vertex of the graph is
at distance at most $d$ from a vertex of $C$.

\begin{theorem}\label{thm:hamming-domination}
For integers $m\geq2$ and $k,d\geq1$, the graph $K_m^{\square k}$ has a
distance-$d$ dominating set $C$ satisfying
\[
 |C|=m^{k-d\lfloor\log_m(k/d+1)\rfloor}
 <\frac{d^d m^{k+d}}{{(k+d)}^d}.
\]
\end{theorem}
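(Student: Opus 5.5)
Set $r=\lfloor\log_m(k/d+1)\rfloor$, so $m^r\le k/d+1<m^{r+1}$. The plan is a Hamming-code-type construction that only needs the alphabet $[m]$ to be an abelian group, not a field. The case $r=0$ is trivial: take $C$ to be the whole vertex set, of size $m^k$. So assume $r\ge1$.

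Identify $[m]$ with $\mathbb{Z}_m$. Let $N=(m^r-1)/(m-1)$, so that $N\le m^r-1\le k/d$, i.e.\ $dN\le k$. Index $N$ nonzero vectors $h_1,\dots,h_N\in\mathbb{Z}_m^r$ with the following property: every $s\in\mathbb{Z}_m^r$ can be written as $s=\sum_i a_ih_i$ with at most one nonzero $a_i$ (``single-error correcting'' syndromes). When $m$ is not prime, projective classes are not available, so instead I would take all $m^r-1$ nonzero vectors as columns. This needs $m^r-1\le k/d$, which is exactly what the floor gives. Then $s=1\cdot h_s$ for every $s\ne0$, and $s=0$ is trivial. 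This choice avoids any field structure.

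Now split the $k$ coordinates into $d$ disjoint blocks $B_1,\dots,B_d$ of $m^r-1$ coordinates each, leaving $k-d(m^r-1)\ge0$ free coordinates. Within block $B_j$, label the coordinates by the nonzero vectors of $\mathbb{Z}_m^r$ and define the syndrome map $\sigma_j(x)=\sum_{v\ne0}x_v v\in\mathbb{Z}_m^r$. Let
\[
C=\{x:\sigma_j(x)=0\text{ for all }j\}.
\]
Each $\sigma_j$ is a surjective group homomorphism onto $\mathbb{Z}_m^r$, and the maps act on disjoint coordinates. Hence $C$ is the kernel of a surjection onto $(\mathbb{Z}_m^r)^d$, and $|C|=m^{k-dr}$, which matches the required size.

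For domination, take any word $x$. For each $j$ with $s_j=\sigma_j(x)\ne0$, subtract $1$ from coordinate $s_j$ of block $B_j$. This changes one coordinate per block, so at most $d$ coordinates in total, and the result lies in $C$. Hence every vertex is within distance $d$ of $C$.

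It remains to check the inequality. Since $m^{r+1}>k/d+1=(k+d)/d$, we get $m^{r}>(k+d)/(dm)$. Therefore
\[
m^{k-dr}<m^k\Bigl(\frac{dm}{k+d}\Bigr)^d=\frac{d^dm^{k+d}}{(k+d)^d},
\]
and this is strict also when $r=0$. The only delicate point is avoiding field arithmetic for composite $m$. Using all nonzero vectors as columns handles that, at the cost of the slightly weaker count $m^r-1$ in place of $(m^r-1)/(m-1)$, which is precisely what the stated floor accommodates.
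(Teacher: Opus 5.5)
Your proposal is correct and is essentially the paper's construction: $d$ disjoint blocks of $m^r-1$ coordinates labelled by the nonzero vectors of $\mathbb{Z}_m^r$, the code taken as the kernel of the surjective block-syndrome map onto $(\mathbb{Z}_m^r)^d$, correction by one coordinate per block, and the same estimate $m^r>(k+d)/(dm)$ from the floor. The opening detour through $N=(m^r-1)/(m-1)$ and projective classes is never used and can simply be deleted.
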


\begin{proof}
Let $A=\mathbb Z_m=\{0_A,1_A,\ldots,{(m-1)}_A\}$ be the additive cyclic
group of order $m$.  Choose a bijection between $[m]$ and $A$, and use it
to identify the vertices of $K_m^{\square k}$ with the elements of $A^k$.

Set
$t=\left\lfloor\log_m(k/d+1)\right\rfloor$.
Raising to the power of $m$ in both sides, we get $m^t\leq k/d+1$, and thus,  $d(m^t-1)\leq k$. Put $p=m^t-1$. 

Let
$H=\bigoplus_{\ell=1}^d A^t $ 
be the direct sum of $d$ copies of $A^t$ (thus, an element of $H$ contains $d$ components, each belonging to $A^t$).  Enumerate
the elements of $A^t$ as 
\[
 A^t=\{v_0,v_1,\ldots,v_p\}, \textrm{ where } v_0=0_{A^t}.
 \]
For $1\leq\ell\leq d$ and $1\leq r\leq p$, define 
\[
 a_{(\ell-1)p+r}
 =(0_{A^t},\ldots,0_{A^t},v_r,
   0_{A^t},\ldots,0_{A^t})\in H,
\]
where $v_r$ occurs in the $\ell$th summand.  For $dp<j\leq k$, set
$a_j=0_H$.

Represent each element of $A$ by its unique integer in
$\{0,1,\ldots,m-1\}$.  For $x=(x_1,\ldots,x_k)\in A^k$, define a map $\phi:A^k\to H$ as
\[
 \phi(x)=\sum_{j=1}^k x_j a_j,
\]
where $x_j a_j$ denotes the sum of $x_j$ copies of $a_j$ in $H$.  Since $H$ is built from copies of  $\mathbb{Z}_m$, adding any element $a_j$ in $H$ $m$ times gives the identity element $0_H \in H$. Therefore, reducing a coefficient modulo $m$ does not change the sum. Thus, $\phi$ is a group
homomorphism.  Let $C=\ker\phi$.

If $t=0$, then $H$ is trivial, so $\phi$ is surjective. Suppose that
$t\geq1$.  For each $\ell$, the labels indexed from $(\ell-1)p+1$
through $\ell p$ include the vectors having a standard basis vector of
$A^t$ in the $\ell$th summand and zero vectors in all other summands.
These labels generate $H$, so $\phi$ is again surjective.  The First
Isomorphism Theorem now gives 
\[
 |C|=\frac{|A^k|}{|H|}= \frac{m^k}{m^{dt}} = m^{k-dt}.
\]
Fix $x\in A^k$, and write $
 \phi(x)=(\sigma_1,\ldots,\sigma_d)$,
where $\sigma_\ell\in A^t$.  For each $\ell$ such that
$\sigma_\ell\ne0_{A^t}$, let $r_\ell$ be the unique index in
$\{1,\ldots,p\}$ satisfying $\sigma_\ell=v_{r_\ell}$, and put $
 j_\ell=(\ell-1)p+r_\ell$.
The indices $j_\ell$ are distinct.  
Let $e_j$ denote the element of $A^k$ having $1_A$ in coordinate $j$
and $0_A$ elsewhere (like the standard basis vector). Set
\[
 x'=x-\sum_{\ell:\,\sigma_\ell\ne0_{A^t}}e_{j_\ell}.
\]
The elements $x$ and $x'$ differ in at most $d$ coordinates.  Furthermore,
\[
 \phi(x')
 =\phi(x)-\sum_{\ell:\,\sigma_\ell\ne0_{A^t}}
   \phi(e_{j_\ell})
 =\phi(x)-\sum_{\ell:\,\sigma_\ell\ne0_{A^t}}a_{j_\ell}
 =0_H.
\]

Thus $x'\in C$, and every vertex of $K_m^{\square k}$ is at distance at
most $d$ from $C$.

Finally, since $t> \log_m(k/d+1) - 1$,  we have $
 m^t>\frac{k/d+1}{m}=\frac{k+d}{dm}$.
Therefore, 
\[
|C|=m^{k-dt}
 <m^k{\left(\frac{dm}{k+d}\right)}^d
 =\frac{d^d m^{k+d}}{{(k+d)}^d}.
 \]
\end{proof}

A radius-$d$ ball in $K_m^{\square k}$ has
\[
 V_{m,k}(d)
 =\sum_{i=0}^{\min\{d,k\}}\binom{k}{i}{(m-1)}^i
\]
vertices.  If $D$ is a distance-$d$ dominating set, then these balls,
centered at the vertices of $D$, cover all $m^k$ vertices.  Consequently,
$ |D|V_{m,k}(d)\geq m^k$, and hence
\[
 |D|\geq\frac{m^k}{V_{m,k}(d)}.
\]
For fixed $m$ and $d$, we have $V_{m,k}(d)=\Theta(k^d)$.
Thus the construction in Theorem~\ref{thm:hamming-domination} has order
$\Theta(m^k/k^d)$ and is asymptotically optimal up to a constant factor.

\section{The Lazy Cop Number of Generalized Hypercubes}

\subsection{The Strategy}

For $x=(x_1,\ldots,x_n)\in V(Q(n,m))$, define the \emph{support} of $x$
by $\operatorname{supp}(x)=\{j\in[n]\colon x_j\ne0\}$. For $k\in\{0,\ldots,n\}$, the \emph{$k$th level} is $
 \Lev_k=\{x\in V(Q(n,m))\colon |\operatorname{supp}(x)|=k\}$.
A set $B$ \emph{dominates} a vertex set $U$ if every vertex of $U$ either
belongs to $B$ or has a neighbor in $B$.

The following lemma formalizes the method of Offner and
Ojakian~\cite{OffnerOjakian}, as extended to generalized hypercubes by Sim,
Tan, and Wong~\cite{SimTanWong}.

\begin{lemma}\label{lem:moving-sets}
Suppose that, for every $k\in\{0,\ldots,n\}$, a set $B_k$ of at most
$M$ vertices dominates $\Lev_k$.  Then
$ \cL(Q(n,m))\leq2M$.
\end{lemma}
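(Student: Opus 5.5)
We use two teams of $M$ cops each and make the robber's support size monotone. Team~1 occupies a set dominating the level the robber is in. Team~2 is repositioned, one cop move at a time, onto a dominating set of the next level. Because levels move away from the origin and the robber cannot go back, capture eventually follows.

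\emph{Setup.} All $2M$ cops start at the origin $0\in\Lev_0$. We first move Team~1 onto $B_0$ and Team~2 onto $B_1$; these moves take finitely many turns. Since $\Lev_0=\{0\}$ is dominated by $B_0$, either a cop sits on $0$ or one is adjacent to it.

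\emph{Invariant.} At a stage indexed by $k$, Team~1 occupies $B_k$ and Team~2 occupies $B_{k+1}$. Moreover, the robber is at a vertex $x$ with $|\operatorname{supp}(x)|\geq k+2$, or it cannot safely occupy levels $\le k+1$. The observation behind this is that a neighbor of a vertex in $\Lev_j$ lies in $\Lev_{j-1}\cup\Lev_j\cup\Lev_{j+1}$. So whenever the robber enters a level $j$ whose dominating set is fully occupied, some cop is adjacent to or on the robber. If it is the cops' turn, that cop captures it.

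Cops move one at a time, so Team~1 and Team~2 cannot both stay put while one of them is being relocated. I would therefore state the invariant more carefully. While Team~1 (sitting on $B_k$) is walked to $B_{k+2}$, Team~2 stays fixed on $B_{k+1}$. Level $k+1$ then acts as a wall: the robber, at level at least $k+2$, can only reach a level $\le k+1$ by stepping into $\Lev_{k+1}$. Doing so puts it adjacent to or on a Team~2 cop, which captures it on the next cop turn. Hence during the whole relocation the robber stays in levels $\ge k+2$, and it cannot stand on $\Lev_{k+2}$ once $B_{k+2}$ is fully occupied. Each cop moves along a shortest path, so the relocation takes finitely many turns. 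After it, the teams swap roles and the index increases. Iterating up to $k=n$ leaves the robber no safe level, since levels above $n$ do not exist, so it is captured.

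\emph{Initial step.} The step I expect to be the main obstacle is starting the sweep. After the cops are placed, the robber may choose any level, so we cannot simply assume it lies above the occupied dominated levels. I would handle this as in the sweeping method of Offner and Ojakian~\cite{OffnerOjakian}. Initially Team~1 is on $B_0$ and Team~2 is on $B_1$. A robber in $\Lev_0\cup\Lev_1$ is dominated and is captured immediately. Otherwise the robber is at level $\geq2$, which is exactly the invariant for $k=0$. A subtler point is that during relocation the moving team is not a wall. This is fine, because the stationary team alone separates the robber from the lower levels. The argument then needs only the adjacency structure between consecutive levels and that each $B_j$ has at most $M$ vertices, which gives $\cL(Q(n,m))\le 2M$.
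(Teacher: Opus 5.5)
Your argument is correct and is the paper's two-team ``wall'' argument run in the opposite direction: the paper places both teams on $B_n$ and sweeps down to $B_0$, while you place the teams on $B_0$ and $B_1$ and sweep up to $B_n$, which works equally well because every edge changes the support size by at most one. The only blemish is that your Setup paragraph (all cops start at the origin and then move) is inconsistent with your Initial step; since the cops choose their vertices before the robber does, simply place Team~1 on $B_0$ and Team~2 on $B_1$ at the start, with spare cops parked on occupied vertices when $|B_j|<M$.
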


\begin{proof}
Place two teams of $M$ cops so that each team occupies every vertex of
$B_n$.  If $|B_n|<M$, place the unused cops at any occupied vertex.  If the
robber starts in $\Lev_n$, then a cop captures the robber on the first cop
turn.  Hence we may assume that the robber starts below $\Lev_n$.

Suppose that one team occupies $B_k$ and that the robber is below
$\Lev_k$.  Keep this team on $B_k$ while the cops in the other team move,
one at a time, until they occupy every vertex of $B_{k-1}$.  Such a
transfer is possible because $Q(n,m)$ is connected.  Every edge of
$Q(n,m)$ changes the support size by at most one.  Therefore, during the
transfer, the robber cannot move from below $\Lev_k$ to above it without
entering $\Lev_k$.  If the robber enters $\Lev_k$, then a cop on $B_k$
captures the robber on the next cop turn.

Once the second team occupies $B_{k-1}$, the robber can survive the next
cop turn only by moving below $\Lev_{k-1}$.  Indeed, a robber that remains
in $\Lev_{k-1}$ is captured from $B_{k-1}$, while a robber that moves into
$\Lev_k$ is captured from $B_k$.  The teams may therefore exchange roles
and repeat the transfer with $k-1$ in place of $k$.

Continuing downward eventually places a team on $B_0$.  At that point the
robber cannot move to a lower level, so capture follows.
\end{proof}

\subsection{Proof of the Main Theorem}

Fix $k\in[n]$ and a set $S\subseteq[n]$ with $|S|=k$.  Let $
 V_S=\{x\in\Lev_k\colon \operatorname{supp}(x)=S\}$. Each coordinate indexed by $S$ can take one of $m$ nonzero values.
Consequently, $
 Q(n,m)[V_S]\cong K_m^{\square k}$.
The sets $V_S$, over all $k$-element subsets $S$ of $[n]$, partition
$\Lev_k$ into $\binom nk$ support classes.

Apply Theorem~\ref{thm:hamming-domination} with $d=1$ independently to
each support class.  Let $B_k$ be the union of the resulting dominating
sets, and set $B_0=\Lev_0$.  Then $B_k$ dominates $\Lev_k$, and, for
every $k\in\{0,\ldots,n\}$,
\begin{equation}\label{eq:Bk}
 |B_k|
 <\binom nk\frac{m^{k+1}}{k+1}.
\end{equation}

Put $q=m+1$ and $\alpha=m/(m+1)$, and let
$X\sim\operatorname{Bin}(n,\alpha)$.  A direct calculation gives
\[
 \Pr(X=k)
 =\binom nk\alpha^k{(1-\alpha)}^{n-k}
 =\binom nk\frac{m^k}{q^n}.
\]
Hence,
\begin{equation}\label{eq:binomial-identity}
 \binom nk m^k=q^n\Pr(X=k).
\end{equation}

By the unimodality of the binomial distribution and Stirling's
formula~\cite[Section~3.6.2]{Cameron}, there is a constant $D_m>0$ such
that, for all sufficiently large $n$ and every
$k\in\{0,\ldots,n\}$,
$ \Pr(X=k)\leq D_m/\sqrt n$.
If $k\geq\alpha n/2$, then equations~\eqref{eq:Bk}
and~\eqref{eq:binomial-identity} give
\[
 |B_k|
 <\frac{m}{k+1}q^n\Pr(X=k)
 \leq\frac{2mD_m}{\alpha}\frac{q^n}{n^{3/2}}.
\]

Now suppose that $k<\alpha n/2$.  The multiplicative Chernoff lower-tail
inequality~\cite[Appendix~A]{AlonSpencer} gives
\[
 \Pr\!\left(X\leq\frac{\alpha n}{2}\right)
 \leq\exp\!\left(-\frac{\alpha n}{8}\right).
\]
Since $B_k\subseteq\Lev_k$, we have
\[
 |B_k|
 \leq|\Lev_k|
 =q^n\Pr(X=k)
 \leq q^n\exp\!\left(-\frac{\alpha n}{8}\right)
 \leq\frac{q^n}{n^{3/2}}
\]
for all sufficiently large $n$. Both ranges of $k$ therefore satisfy the required estimate.  Consequently,
$\max_{0\leq k\leq n}|B_k|
 =O\!\left(\frac{{(m+1)}^n}{n^{3/2}}\right)$.

Theorem~\ref{thm:main} now follows from Lemma~\ref{lem:moving-sets}.

\section*{Declaration on the Origin of the Proof and Use of Generative AI}

The radius-one construction that motivated this manuscript was developed
through AI-assisted mathematical exploration with OpenAI Codex.  The named
authors independently reconstructed, reviewed, and verified every
mathematical argument and take full responsibility for all claims,
citations, and remaining errors.

\bibliographystyle{plain}
\bibliography{references}

\end{document}